\documentclass{article}
\usepackage{amsmath,amssymb,titling,amsthm,enumitem,mathrsfs,euscript,upgreek,wasysym}
\usepackage[utf8]{inputenc}
\usepackage[margin=1in]{geometry}

\theoremstyle{plain}
\newtheorem{thm}{Theorem}[section]

\makeatletter
\newcommand\varitem[1]{\item[\textbf{A\arabic{enumi}\rlap{$#1$}.}]%
  \edef\@currentlabel{A\arabic{enumi}{$#1$}}}
\makeatother

\theoremstyle{definition}
\newtheorem{defn}{Definition}[section]

\theoremstyle{remark}

\newtheorem*{note}{Note}

\begin{document}
{\centering\scshape\Large\textsc{Transfinite Galois Theory} \par}
{\centering\scshape\textsc{Alec Rhea} \par}

\vspace{5mm}

\begin{abstract}
In this paper I generalize the notion of a polynomial over an ordered field to that of a naked polynomial over a non-Archimedean ordered field, subsequently showing that the notion of a naked polynomial ring forms an Euclidean domain. This canonically generalizes the methods of Galois theory of fields and polynomial rings to a transfinite Galois theory of non-Archimedean ordered fields and naked polynomial rings, lifting the processes of splitting and algebraic closure to non-Archimedean ordered fields. \\
\end{abstract}

\section{Transfinite Galois Theory}
A Galois theory of the Surrational numbers $\mathbb{Q}_\infty$ ([$1$]) is motivated by the fact that there are convex multiplicative subgroups of the Surreals generated by roots of non-finite elements -- as a result, although $\mathbb{Q}_\infty$ is dense in the pieces of the Surreal line where it is defined, there are entire convex segmemts where it is not defined around $\sqrt[\alpha]{\omega}$ for all $\alpha>2$ (it is defined to the right of $\sqrt{\omega})$ and other infinite/infinitesimal roots. Accordingly, we now begin generalizing classical Galois theory to a fundamentally transfinite version necessary for constructing field extensions which contain these infinite and infinitesimal roots over $\mathbb{Q}_\infty$. The first step in this process is defining an appropriately generalized notion of a polynomial and polynomial ring, which can take on positive Surinteger ([$1$]) values in its 'exponents' instead of just natural numbers. \\

\begin{defn}
Let $\mathbb{F}$ be an ordered field, and let ${\mathbb{G}^+}$ be the nonnegative part of a discretely ordered group. We define $\mathbb{F}[X^{\mathbb{G}^+}]$ to be $\mathbb{F}^{\mathbb{G}^+}_{\subseteq max}$, the ordered group algebra whose elements are functions $f:{\mathbb{G}^+}\rightarrow\mathbb{F}$ such that all subsets $\mathbb{A}\subseteq supp(f)\subseteq\mathbb{G}^+$ have a maximal element, together with the standard algebraic structure and a total ordering structure. More precisely, we define $$\mathbb{F}[X^{\mathbb{G}^+}]=\{f\in\mathbb{F}^{\mathbb{G}^+}:\mathbb{A}\subset supp(f)\implies\exists x\in\mathbb{A}\forall y\in\mathbb{A}\big[y\leq x\big]\}.$$ For $p\in\mathbb{F}[X^{\mathbb{G}^+}]$ we will write $p=(p_\alpha)_{\alpha\in{\mathbb{G}^+}}=(p_0,\dots)$, where $p_\alpha$ is the image of $\alpha\in{\mathbb{G}^+}$ under $p$; we denote by $supp(p)\subseteq{\mathbb{G}^+}$ the set of elements whose image is nonzero. Let $q\in\mathbb{F}[X^{\mathbb{G}^+}]$ as well with $q=(q_\alpha)_{\alpha\in{\mathbb{G}^+}}$. The addition $\hat+:\mathbb{F}[X^{\mathbb{G}^+}]\times\mathbb{F}[X^{\mathbb{G}^+}]\rightarrow\mathbb{F}[X^{\mathbb{G}^+}]$ is given by $$p\hat+q=(p_\alpha+q_\alpha)_{\alpha\in{\mathbb{G}^+}},$$ $$\hat+=\langle p\hat+q:p,q\in\mathbb{F}[X^{\mathbb{G}^+}]\rangle.$$ Negation $\hat -:\mathbb{F}[X^{\mathbb{G}^+}]\rightarrow\mathbb{F}[X^{\mathbb{G}^+}]$ is then given by $$-p=(-p_\alpha)_{\alpha\in{\mathbb{G}^+}},$$ $$\hat-=\langle-p:p\in\mathbb{F}[X^{\mathbb{G}^+}]\rangle.$$ Multiplication $\hat\times:\mathbb{F}[X^{\mathbb{G}^+}]\times\mathbb{F}[X^{\mathbb{G}^+}]\rightarrow\mathbb{F}[X^{\mathbb{G}^+}]$ is then defined coordinate-wise by $$(p\hat\times q)_\gamma=\sum_{\alpha+\beta=\gamma}p_\alpha q_\beta$$ where we have renamed the indices of $q$, yielding $$p\hat\times q=\big((p\hat\times q)_\gamma\big)_{\gamma\in{\mathbb{G}^+},}$$ $$\hat\times=\langle p\hat\times q:p,q\in\mathbb{F}[X^{\mathbb{G}^+}]\rangle.$$ Finally, the ordering on $\mathbb{F}[X^{\mathbb{G}^+}]$, written as $\preceq$, is defined as follows. For all $p,q\in\mathbb{F}[X^{\mathbb{G}^+}]$, let $$_p\uparrow_q=\max\{g\in\mathbb{G}^+:p(g)\neq q(g)\},$$ so $_p\uparrow_q\in\mathbb{G}^+$ is the last coordinate at which $p$ and $q$ differ -- this is well defined since \\ $\{g\in\mathbb{G}^+:p(g)\neq q(g)\}\subseteq\big[supp(p)\cup supp(q)\big]$. We then define $$\preceq=\{(p,q):p(_p\uparrow_q)<q(_p\uparrow_q)\vee p=q\},$$ which is essentially a reverse lexicographic ordering on the functions, where the discretely ordered monoid elements serve as the letter positions. The members of $\mathbb{F}[X^{\mathbb{G}^+}]$ will be called {\bf naked polynomials}, and we will refer to $\mathbb{F}[X^{\mathbb{G}^+}]$ as a {\bf naked polynomial ring} over $\mathbb{F}$.
\end{defn}

\begin{note}
If we view a naked polynomial $p=(p_\alpha)_{\alpha\in{\mathbb{G}^+}}$ as a 'dressed up' polynomial by writing it as $$p=\sum_{\alpha\in{\mathbb{G}^+}}p_\alpha X^\alpha,$$ the above definitions match exactly our intuition for how polynomials should behave. The definitions and notation employed here are to emphasize that the structure underlying the 'exponents' of a polynomial ring is not most correctly described as that of a vector space basis, but rather an indexed collection of discretely ordered positions such that all subsets of filled positions have a maximal element.  Polynomial addition combines elements in identical positions, and multiplication fuses elements from different positions in the classically understood "add the exponents" fashion. Further, $_{\hat0}\uparrow_q$ will play the role of $deg(q)$ for all naked polynomials $q\in\mathbb{F}[X^{\mathbb{G}^+}]$, where $\hat0\in\mathbb{F}[X^{\mathbb{G}^+}]$ is the function with empty support.\\
\end{note}

The notion of a naked polynomial ring is a canonical generalization of a polynomial ring in the sense that a naked polynomial ring is a polynomial ring when the discretely ordered monoid in question is the countable infinity $\omega$ under natural addition, or equivalently the natural numbers. Using $\mathbb{Z}_\lambda^+$ (or all of $\mathbb{Z}_\infty^+$) produces larger non-Archimedean discretely ordered monoids with the necessary property, and consequently a generalized notion of a polynimial ring. \\

\begin{thm}
For any ordered field $\mathbb{F}$ with $\mathbb{Z}_\infty$ viewed as a discretely ordered group under addition, $\mathbb{F}[X^{\mathbb{Z}_\infty^+}]$ is an ordered ring under $\hat+$, $\hat-$, $\hat\times$ and $\preceq$, with additive identity $\hat0=(0)_{\alpha\in \mathbb{Z}_\infty^+}=(0,0,0,\dots)$ and multiplicative identity $\hat1=(1,0,0,\dots)$.
\end{thm}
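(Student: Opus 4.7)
The plan is to verify the theorem in three stages: (i) that $\hat+$, $\hat-$, and $\hat\times$ actually land in $\mathbb{F}[X^{\mathbb{Z}_\infty^+}]$, (ii) that the ring axioms hold coordinate-wise, and (iii) that $\preceq$ is a total order compatible with the two binary operations. The whole construction is a Hahn-style group algebra with the nonstandard convention that every nonempty subset of a support must have a \emph{maximum} rather than a minimum, so much of the proof amounts to standard Hahn-series bookkeeping dualized to the reverse well-ordered side.

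For stage (i), closure under $\hat+$ and $\hat-$ is immediate: since $supp(p\hat+q)\subseteq supp(p)\cup supp(q)$, it suffices to observe that if $A,B\subseteq\mathbb{Z}_\infty^+$ are such that every subset of $A$ and of $B$ has a maximum, then every subset $S\subseteq A\cup B$ does too, namely the larger of $\max(S\cap A)$ and $\max(S\cap B)$. Closure under $\hat\times$ is the genuine content: the sum $(p\hat\times q)_\gamma=\sum_{\alpha+\beta=\gamma}p_\alpha q_\beta$ must be finite for each $\gamma$ so that it makes sense in $\mathbb{F}$, and $supp(p\hat\times q)\subseteq supp(p)+supp(q)$ must again enjoy the max-on-every-subset property. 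Both facts reduce to a dualized Hahn lemma: if $A,B\subseteq\mathbb{Z}_\infty^+$ each satisfy the hypothesis, then so does $A+B$, and each element of $A+B$ admits only finitely many decompositions $\alpha+\beta$ with $\alpha\in A$, $\beta\in B$. I would prove both by contradiction, extracting from a putative bad subset of $A+B$ or an infinite decomposition of a single element a strictly increasing sequence lying entirely in $A$ or entirely in $B$, which then contradicts the max-on-every-subset hypothesis on whichever factor the sequence resides in.

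Stage (ii) is routine. Commutativity and associativity of $\hat+$ descend from $\mathbb{F}$ coordinate-by-coordinate; associativity and distributivity of $\hat\times$ descend from $\mathbb{F}$ together with associativity of $+$ on $\mathbb{Z}_\infty^+$; $\hat 0$ is clearly an additive identity; $\hat 1=(1,0,0,\dots)$ is a multiplicative identity because $(\hat 1\hat\times p)_\gamma=\hat 1_0\cdot p_\gamma=p_\gamma$; and additive inverses are supplied by $\hat-$. For stage (iii), when $p\neq q$ the set $\{g:p(g)\neq q(g)\}$ is a nonempty subset of $supp(p)\cup supp(q)$, hence has a maximum by stage (i), so $_p\uparrow_q$ is well defined. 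Totality and antisymmetry of $\preceq$ follow from trichotomy of $<$ on $\mathbb{F}$ applied at that single coordinate; transitivity for $p\prec q\prec r$ is handled by a case split on whether $_p\uparrow_q$ is greater than, less than, or equal to $_q\uparrow_r$, with each case reducing to a single strict inequality in $\mathbb{F}$. Compatibility with $\hat+$ is easy since $_{p\hat+r}\uparrow_{q\hat+r}=\,_p\uparrow_q$ and adding $r(_p\uparrow_q)$ preserves strict inequality in $\mathbb{F}$. Compatibility with $\hat\times$ reduces to showing that $\hat 0\prec p$ and $\hat 0\prec q$ imply $\hat 0\prec p\hat\times q$: if $\alpha_0,\beta_0$ are the leading exponents of $p,q$, then $\alpha_0+\beta_0$ is the leading exponent of the product with coefficient $p_{\alpha_0}q_{\beta_0}>0$, while any $\gamma>\alpha_0+\beta_0$ forces $\alpha>\alpha_0$ or $\beta>\beta_0$ in every decomposition $\alpha+\beta=\gamma$, killing that coordinate.

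The main obstacle is the dualized Hahn lemma in stage (i); once it is in hand, everything else is coordinate-wise bookkeeping. The extra care required over the classical Hahn argument is that $\mathbb{Z}_\infty^+$ is non-Archimedean and discretely ordered of a rather intricate order type, so the strictly increasing sequences extracted in the contradiction arguments must be produced explicitly and shown to genuinely contradict the reverse-well-ordered structure of the given supports rather than merely a naive well-orderedness assumption.
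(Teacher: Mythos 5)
Your plan is correct, and where it overlaps with the paper it is essentially the same argument: the paper's proof consists exactly of your stages (ii) and (iii), checking identities, commutativity, associativity, distributivity, and the two order-compatibility conditions coordinate by coordinate. The genuine difference is your stage (i), which the paper omits entirely: it never verifies that $\sum_{\alpha+\beta=\gamma}p_\alpha q_\beta$ is a finite (hence meaningful) sum in $\mathbb{F}$, nor that $supp(p\hat\times q)\subseteq supp(p)+supp(q)$ again has the every-nonempty-subset-has-a-maximum property, so the well-definedness of $\hat\times$ --- arguably the only nontrivial content of the theorem --- is taken for granted. Your dualized Neumann/Hahn lemma is exactly what is needed and is true here: under the reversed ordering $\mathbb{Z}_\infty^+$ is still an ordered monoid and the supports become well-ordered, so the classical argument (extract a strictly monotone $\omega$-sequence from a putative bad subset or from infinitely many decompositions of one exponent, pass to a subsequence monotone in the $A$-components, and obtain a strictly increasing sequence in $B$ with no maximum) goes through; just note that producing such an $\omega$-sequence uses dependent choice, which is harmless given the paper's ambient choice assumptions. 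You also supply two smaller verifications the paper asserts without argument: that $\preceq$ is transitive and total (your case split on ${}_p\uparrow_q$ versus ${}_q\uparrow_r$), and that the leading coordinate of $p\hat\times q$ is exactly $p_{\alpha_0}q_{\beta_0}$ because any other decomposition of $\alpha_0+\beta_0$ would require an exponent outside $supp(p)$ or $supp(q)$. In short, your route yields a complete proof, while the paper trades that completeness for brevity by leaving the Hahn-type closure lemma implicit.
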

\begin{proof}
We will first show that $\mathbb{F}[X^{ \mathbb{Z}_\infty^+}]$ is a group under $\hat+$ and $\hat-$; suppose $p,q,r\in\mathbb{F}[X^{ \mathbb{Z}_\infty^+}]$. We then have that $$p\hat+\hat0=(p_\alpha+0)_{\alpha\in { \mathbb{Z}_\infty^+}}=(p_\alpha)_{\alpha\in { O_n}}=p,$$ so $\hat0$ is an additive identity. Further, we have that $$p\hat+q=(p_\alpha+q_\alpha)_{\alpha\in { \mathbb{Z}_\infty^+}}=(q_\alpha+p_\alpha)_{\alpha\in { \mathbb{Z}_\infty^+}}=q\hat+p$$ and $$p\hat+(q\hat+r)=p\hat+(q_\alpha+r_\alpha)_{\alpha\in { \mathbb{Z}_\infty^+}}=\big(p_\alpha+(q_\alpha+r_\alpha)\big)_{\alpha\in { \mathbb{Z}_\infty^+}}$$ $$=\big((p_\alpha+q_\alpha)+r_\alpha\big)=(p_\alpha+q_\alpha)_{\alpha\in { \mathbb{Z}_\infty^+}}\hat+r=(p\hat+q)\hat+r,$$ so $\hat+$ is commutative and associative in $\mathbb{F}[X^{ \mathbb{Z}_\infty^+}]$. Finally, we observe that $$p\hat+(\hat-p)=(p_\alpha-p_\alpha)_{\alpha\in { \mathbb{Z}_\infty^+}}=(0)_{\alpha\in { \mathbb{Z}_\infty^+}}=(0,0,0,\dots)=\hat0,$$ so all elements have unique additive inverses and $\hat-$ is the additive inversion mapping. This completes the proof that $\mathbb{F}[X^{ \mathbb{Z}_\infty^+}]$ is a group under $\hat+$ and $\hat-$. \\ We now show that $\mathbb{F}[X^{ \mathbb{Z}_\infty^+}]$ is a monoid under $\hat\times$. We first observe that $$p\hat\times\hat1=(p_\alpha\times 1)_{\alpha\in { \mathbb{Z}_\infty^+}}=(p_\alpha)_{\alpha\in { \mathbb{Z}_\infty^+}},$$ so $\hat1$ is a multiplicative identity in $\mathbb{F}[X^{ \mathbb{Z}_\infty^+}]$. We now proceed to argue by coordinates; for all $\gamma\in { \mathbb{Z}_\infty^+}$ we have that $$(p\hat\times q)_\gamma=\sum_{\alpha+\beta=\gamma}p_\alpha q_\beta=\sum_{\beta+\alpha=\gamma}q_\beta p_\alpha=(q\hat\times p)_\gamma,$$ thus $$p\hat\times q=\big((p\hat\times q)_\gamma\big)_{\gamma\in { \mathbb{Z}_\infty^+}}=\big((q\hat\times p)_\gamma\big)_{\gamma\in { \mathbb{Z}_\infty^+}}=q\hat\times p,$$ so $\hat\times$ is commutative on $\mathbb{F}[X^{ \mathbb{Z}_\infty^+}]$. Further, we have that $$\big(p\hat\times(q\hat\times r)\big)_\gamma=\sum_{\alpha+\beta=\gamma}p_\alpha(q\hat\times r)_\beta=\sum_{\alpha+\beta=\gamma}p_\alpha\Big(\sum_{\zeta+\nu=\beta}q_\zeta r_\nu\Big)$$ $$=\sum_{\alpha+\beta=\gamma}\big(\sum_{\zeta+\nu=\alpha}p_\zeta q_\nu\big)r_\beta=\sum_{\alpha+\beta=\gamma}(p\hat\times q)_\alpha r_\beta=\big((p\hat\times q)\hat\times r\big)_\gamma,$$ thus $$p\hat\times(q\hat\times r)=\Big(\big(p\hat\times(q\hat\times r)\big)_\gamma\Big)_{\gamma\in { \mathbb{Z}_\infty^+}}=\Big(\big((p\hat\times q)\hat\times r)\big)_\gamma\Big)_{\gamma\in { \mathbb{Z}_\infty^+}}=(p\hat\times q)\hat\times r,$$ so we see that $\hat\times$ is associative on $\mathbb{F}[X^{ \mathbb{Z}_\infty^+}]$, completing the proof that it is a monoid under $\hat\times$. \\ We now show that $\mathbb{F}[X^{ \mathbb{Z}_\infty^+}]$ is a ring under $\hat+$, $\hat-$ and $\hat\times$ by showing that multiplication distributes well over addition. Indeed, we have that $$p\hat\times(q\hat+ r)=p\hat\times(q_\alpha+r_\alpha)_{\alpha\in { \mathbb{Z}_\infty^+}},$$ and for all $\gamma\in  \mathbb{Z}_\infty^+$ we then have that $$\big(p\hat\times(q_\alpha+q_\alpha)_{\alpha\in { \mathbb{Z}_\infty^+}}\big)_\gamma=\sum_{\beta+\nu=\gamma}p_\beta(q_\nu+r_\nu)=\sum_{\beta+\nu=\gamma}p_\beta q_\nu+p_\beta r_\nu$$ $$=\sum_{\beta+\nu=\gamma}p_\beta q_\nu+\sum_{\beta+\nu=\gamma}p_\beta r_\nu=(p\hat\times q)_\gamma+(p\hat\times r)_\gamma.$$ Consequently, $$p\hat\times(q\hat+r)=\Big(\big(p\hat\times(q_\alpha+r_\alpha)_{\alpha\in { \mathbb{Z}_\infty^+}})_\gamma\Big)_{\gamma\in { \mathbb{Z}_\infty^+}}=\big((p\hat\times q)_\gamma+(p\hat\times r)_\gamma\big)_{\gamma\in { \mathbb{Z}_\infty^+}}$$ $$=\big((p\hat\times q)_\gamma\big)_{\gamma\in { \mathbb{Z}_\infty^+}}\hat+\big((p\hat\times r)_\gamma\big)_{\gamma\in { \mathbb{Z}_\infty^+}}=(p\hat\times q)\hat+(p\hat\times r),$$ completing the proof that $\mathbb{F}[X^{ \mathbb{Z}_\infty^+}]$ is a ring under $\hat+$, $\hat-$ and $\hat\times$. \\ We now show that $\mathbb{F}[X^{ \mathbb{Z}_\infty^+}]$ is an ordered ring; suppose that $p\prec q$, so $p_{_p\uparrow_q}<q_{_p\uparrow_q}$. We then observe that $_{p+r}\uparrow_{q+r}=_p\uparrow_q$, since the $_p\uparrow_q^{th}$ coordinate of $p$ already differs from the $_p\uparrow_q^{th}$ coordinate of $q$, so adding the $_p\uparrow_q^{th}$ coordinate of $r$ can't make them match and all higher coordinates will match by definition. Consequently, we have that $$(p\hat+r)_{_{p+r}\uparrow_{q+r}}=(p\hat+r)_{_p\uparrow_q}=p_{_p\uparrow_q}+r_{_p\uparrow_q}<q_{_p\uparrow_q}+r_{_p\uparrow_q}=(q+r)_{_p\uparrow_q}=(q+r)_{_{p+r}\uparrow_{q+r}},$$ thus $p\hat+r\prec q\hat+r$ as desired. Now suppose that $\hat0\prec p$, so $\hat0\prec q$ as well. We then have that the last nonzero coordinate of $p$ and $q$ must be positive, and the last nonzero coordinate of $p\hat\times q$ will be the product of these two coordinates, so it will also be positive, thus $\hat0\prec p\hat\times q$.  This completes the proof that $\mathbb{F}[X^{ \mathbb{Z}_\infty^+}]$ is an ordered ring under $\hat+$, $\hat-$, $\hat\times$ and $\preceq$. \\
\end{proof}

Note that $\mathbb{F}[X^{ \mathbb{Z}_\infty^+}]$ is a proper class; accordingly, we will refer to $\mathbb{F}[X^{ \mathbb{Z}_\infty^+}]$ as a naked polynomial Ring. Further, for any $\gamma$-number $\omega^\eta$ with $\eta\in O_n$ fixed we have that $\mathbb{F}[X^{\mathbb{Z}_{\omega^\eta}^+}]$, where we view $\mathbb{Z}_{\omega^\eta}^+$ as an ordered additive monoid, is also a proper subring of $\mathbb{F}[X^{\mathbb{Z}_\infty^+}]$. \\

\vspace{5mm}

\begin{thm}
Let $\eta\in O_n$ be fixed; then $\mathbb{F}[X^{\mathbb{Z}_{\omega^\eta}^+}]$ is a proper subring of $\mathbb{F}[X^{\mathbb{Z}_\infty^+}]$.
\end{thm}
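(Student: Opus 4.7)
The plan is to exhibit a concrete ring embedding $\iota:\mathbb{F}[X^{\mathbb{Z}_{\omega^\eta}^+}]\hookrightarrow\mathbb{F}[X^{\mathbb{Z}_\infty^+}]$, verify it respects every piece of structure (including the order), and then exhibit an explicit naked polynomial in the codomain that is not in the image. Define $\iota(p)$ to be the function on $\mathbb{Z}_\infty^+$ agreeing with $p$ on $\mathbb{Z}_{\omega^\eta}^+$ and sending every $\alpha\geq\omega^\eta$ to $0$. Well-definedness follows from the observation that $\mathrm{supp}(\iota(p))=\mathrm{supp}(p)\subseteq\mathbb{Z}_{\omega^\eta}^+\subseteq\mathbb{Z}_\infty^+$, so every subset of $\mathrm{supp}(\iota(p))$ is a subset of $\mathrm{supp}(p)$ and therefore already has a maximal element by hypothesis. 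It is also immediate from the coordinate-wise nature of the definitions that $\iota(\hat 0)=\hat 0$, $\iota(\hat 1)=\hat 1$, $\iota(p\hat+q)=\iota(p)\hat+\iota(q)$, and $\iota(\hat-p)=\hat-\iota(p)$.

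The main obstacle, and the only step where the hypothesis on $\eta$ actually gets used, is preservation of $\hat\times$. For every $\gamma\in\mathbb{Z}_\infty^+$ we need
\[
\bigl(\iota(p)\hat\times\iota(q)\bigr)_\gamma=\iota(p\hat\times q)_\gamma.
\]
If $\gamma\geq\omega^\eta$, then in every decomposition $\gamma=\alpha+\beta$ at least one of $\alpha,\beta$ must be $\geq\omega^\eta$ (else $\alpha+\beta<\omega^\eta$ by the additive principality of the $\gamma$-number $\omega^\eta$), so every summand $\iota(p)_\alpha\iota(q)_\beta$ vanishes and both sides are $0$. If $\gamma<\omega^\eta$, then the only decompositions contributing on either side have both $\alpha,\beta<\omega^\eta$, and on this range $\iota$ acts by the identity, so the sums agree term-by-term. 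Additive principality of $\omega^\eta$ is thus doing all of the work; without it, the codomain sum would pick up genuine terms from exponents $\geq\omega^\eta$ and $\iota$ would fail to be multiplicative.

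For the order, note that for any $p,q\in\mathbb{F}[X^{\mathbb{Z}_{\omega^\eta}^+}]$ we have ${}_{\iota(p)}\!\!\uparrow_{\iota(q)}={}_p\!\!\uparrow_q$, since $\iota$ only pads with zeros beyond $\omega^\eta$ and the disagreement set is unchanged. Hence $p\preceq q$ in the subring iff $\iota(p)\preceq\iota(q)$ in the larger ring, so $\iota$ is an order-preserving ring embedding and its image is a subring isomorphic to $\mathbb{F}[X^{\mathbb{Z}_{\omega^\eta}^+}]$.

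Finally, properness is witnessed by the element $e\in\mathbb{F}[X^{\mathbb{Z}_\infty^+}]$ defined by $e_{\omega^\eta}=1$ and $e_\alpha=0$ otherwise, which is a legitimate naked polynomial (its support is a singleton, trivially satisfying the maximal-element condition) but whose support is not contained in $\mathbb{Z}_{\omega^\eta}^+$ and hence does not lie in the image of $\iota$.
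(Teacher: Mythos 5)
Your proof is correct and takes essentially the same route as the paper: identify $\mathbb{F}[X^{\mathbb{Z}_{\omega^\eta}^+}]$ with its zero-extension inside $\mathbb{F}[X^{\mathbb{Z}_\infty^+}]$, inherit the ordered-ring structure from the preceding theorem, and use the $\gamma$-number property $\alpha,\beta<\omega^\eta\implies\alpha+\beta<\omega^\eta$ for closure under $\hat\times$. You are more explicit than the paper in checking the embedding and the order, and you additionally exhibit the properness witness supported at $\omega^\eta$, which the paper asserts but does not verify.
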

\begin{proof}
That $\mathbb{F}[X^{\omega^\eta}]$ has the structure of an ordered ring except closure follows immedately from Theorem {\it 4.1}, with closure following from the fact that $\omega^\eta$ is a $\gamma$-number for all $\eta\in\mathbb{O}_n$, so $\alpha,\beta<\omega^\eta\implies\alpha+\beta<\omega^\eta$. This completes the proof. \\
\end{proof}

\begin{note}
$$\mathbb{F}[X^{\mathbb{Z}_\infty^+}]=\bigcup_{\alpha\in O_n}\mathbb{F}[X^{\mathbb{Z}_{\omega^\alpha}^+}].$$ \\
\end{note}

We now define a generalization of the Kroneker-delta index symbol $\delta_{ij}$ which will allow us to translate any 'dressed up' polynomial in $\mathbb{Q}_\infty[X^{\mathbb{Z}_\infty^+}]$ into an equivalent naked representation. \\

\begin{defn}
We define a function $[\ ]:\mathbb{Z}_\infty^+\times\mathbb{Q}_\infty\times\mathbb{Z}_\infty^+\rightarrow\mathbb{Q}_\infty$ by $$ _m[q]_n=\begin{cases} q,&{\text if}\ m=n, \\ 0,&{\text if}\ m\neq n.\end{cases}$$ \\
\end{defn}

\begin{note}
We now have that for all $\mathfrak{p}\in\mathbb{Q}_\infty[X^{\mathbb{Z}_\infty^+}]$, we may dress or undress $\mathfrak{p}$ as we see fit using $$\mathfrak{p}=\sum_{n<\gamma}c_n X^{g_n}=\Big(\sum_{n<\gamma}c_n\delta_{\beta g_n}\Big)_{\beta\in\mathbb{Z}_\infty^+}=\Big(\sum_{n<\gamma} {_\beta}[c_n]_{g_n}\Big)_{\beta\in\mathbb{Z}_\infty^+}=\hat\sum_{n<\gamma}({_\beta[c_n]_{g_n}})_{\beta\in\mathbb{Z}_\infty^+},$$ where $\sum_{n<\gamma} {_\beta}[c_n]_{g_n}$ and $({_\beta[c_n]_{g_n}})_{\beta\in\mathbb{Z}^+}$ each have at most one nonzero term, when $g_n=\beta$ ($g$ is injective in general). Additionally, $_i[1]_j=\delta_{ij}$ and $q {_i[1]_j}={_i[q]_j}$ for all $q\in\mathbb{Q}_\infty$ and $i,j\in\mathbb{Z}_\infty^+$. Further, for $\mathfrak{p}=\Big(\sum_{n<\gamma}{_\beta[c_n]_{g_n}}\Big)_{\beta\in\mathbb{Z}_\infty}$ and $\mathfrak{q}=\Big(\sum_{m<\zeta}{_\beta[d_m]_{f_m}}\Big)_{\beta\in\mathbb{Z}_\infty}$, we now have a canonical representation for $\mathfrak{p}\hat\times\mathfrak{q}$: $$\mathfrak{p}\hat\times\mathfrak{q}=\Big(\sum_{n<\gamma}{_\beta[c_n]_{g_n}}\Big)_{\beta\in\mathbb{Z}_\infty}\hat\times\Big(\sum_{m<\zeta}{_\beta[d_m]_{f_m}}\Big)_{\beta\in\mathbb{Z}_\infty}=\Big(\sum_{\ell<\gamma+\zeta}\big(\sum_{g_n+f_m=\ell}{_\beta[c_nd_m]_\ell}\big)\Big)_{\beta\in\mathbb{Z}_\infty^+}.$$ The intuition for this can be checked surprisingly easily using dressed polynomial multiplication. \\
\end{note}

\begin{defn}
Let $\mathbb{F}[X^{\mathbb{G}^+}]$ be a naked polynomial ring. From now on, lowercase Fraktur script lettering from the middle of the alphabet $\mathfrak{p},\mathfrak{q},\mathfrak{r},\dots$ will denote a naked polynomial in $\mathbb{F}[X^{\mathbb{G}^+}]$. For a naked polynomial $\mathfrak{p}\in\mathbb{F}[X^{\mathbb{G}^+}]$, we define the class of {\bf factors} of $\mathfrak{p}$, $\mathcal{F}_\mathfrak{p}$, as $$\mathcal{F}_\mathfrak{p}=\{\{\mathfrak{q},\mathfrak{r}\}:\mathfrak{q}\hat\times\mathfrak{r}=\mathfrak{p}\},$$ and if $\{\mathfrak{q},\mathfrak{r}\}\in\mathcal{F}_\mathfrak{p}$ then we will say that $\mathfrak{q}$ and $\mathfrak{r}$ {\bf split} $\mathfrak{p}$, and refer to them as {\bf factors} of $\mathfrak{p}$. We will say that a naked polynomial $\mathfrak{q}$ is a {\bf root} iff $\mathfrak{q}=(q,1,0,\dots)$ for some fixed $q\in\mathbb{F}$, and we will say that a naked polynomial {\bf has a root} iff at least one of its factors is a root. Additionally, we will say that a naked polynomial $\mathfrak{p}$ {\bf splits} iff all of its factors are roots or $\hat1$. We will also say that a naked polynomial $\mathfrak{p}\in\mathbb{F}[X^{\mathbb{G}^+}]$ is {\bf irreducible} iff $\mathcal{F}_\mathfrak{p}=\{\{\mathfrak{p},\hat1\}\}$. Finally, we define the {\bf ideal generated by} $\mathfrak{p}\in\mathbb{F}[X^{\mathbb{G}^+}]$, denoted $\mathcal{I}_\mathfrak{p}$, as $$\mathcal{I}_\mathfrak{p}=\{\mathfrak{q}:\exists\{x,y\}[\mathfrak{p}\in\{x,y\}\in\mathcal{F}_\mathfrak{q}]\}.$$  \\
\end{defn}

In general, to real-algebraically close $\mathbb{Q}_\lambda$ we should split all irreducible naked polynomials in $\mathbb{Q}_\lambda[X^\lambda]$ except $$\mathfrak{i}=\Big(\sum_{n<2}{_\alpha[1]_{2n}}\Big)_{\alpha\in\mathbb{Z}_\infty^+}=({_\beta[1]_0}+{_\beta[1]_2})_{\alpha\in\mathbb{Z}_\infty^+}=(1,0,1,0,0,0,\dots),$$ the naked equivalent of the infamous complex-closing $x^2+1$. Note that if we heuristically define $i$ by $i^2=-1$ then $$\mathfrak{i}=(1,0,1,0,0,0,\dots)=(i,1,0,0,\dots)\hat\times(-i,1,0,0,\dots),$$ which is the naked equivalent of $x^2+1=(x+i)(x-i)$. To real-algebraically close all of $\mathbb{Q}_\infty$, we need to use all of the irreducible polynomials in $\mathbb{Q}_\infty[X^{{\mathbb{Z}_\infty^+}}]$ except $\mathfrak{i}$. In general, we will introduce the $\alpha^{th}$ root of a surrational number $p\in\mathbb{Q}_\infty$ that doesn't already have one by splitting the irreducible naked polynomial $\mathfrak{p}_\alpha=({_\beta[-p]_0}+{_\beta[1]_\alpha})_{\beta\in\mathbb{Z}_\infty^+}=(-p,\dots,1,\dots)\in\mathbb{Q}_\infty[X^{{\mathbb{Z}_\infty^+}}]$, where $1$ is placed in the $\alpha^{th}$ coordinate position; this is the naked equivalent of $X^\alpha-p$. This allows us to move into the ordered field extension $\mathbb{Q}_\infty[X^{{\mathbb{Z}_\infty^+}}]\setminus\mathcal{I}_{\mathfrak{p}_\alpha}$ over $\mathbb{Q}_\infty$. \\

\begin{defn}
Let $\mathcal{I}\subseteq\mathbb{Q}_\infty[X^{{\mathbb{Z}_\infty^+}}]$ be an ideal. We define a congruence relation $[\mathcal{I}]\subseteq\mathbb{Q}_\infty[X^{{\mathbb{Z}_\infty^+}}]\times\mathbb{Q}_\infty[X^{{\mathbb{Z}_\infty^+}}]$ by $$[\mathcal{I}]=\{(\mathfrak{p},\mathfrak{q}):\mathfrak{p}-\mathfrak{q}\in\mathcal{I}\}.$$ $[\mathcal{I}]$ will be called the {\bf congruence relation induced by} $\mathcal{I}$, and we will write $\mathfrak{p}[\mathcal{I}]\mathfrak{q}$ iff $(p,q)\in[\mathcal{I}]$. Further, since congruence relations are equivalence relations, we denote by $[\mathcal{I}]_\mathfrak{p}$ the equivalence class generated by $[\mathcal{I}]$ and $\mathfrak{p}\in\mathbb{Q}_\infty[X^{{\mathbb{Z}_\infty^+}}]$; that is,  for all $\mathfrak{p}\in\mathbb{Q}_\infty[X^{{\mathbb{Z}_\infty^+}}]$ we define $$[\mathcal{I}]_\mathfrak{p}=\{\mathfrak{q}:\mathfrak{q}[\mathcal{I}]\mathfrak{p}\}.$$ We then define the {\bf ideal quotient of $\mathbb{Q}_\infty[X^{{\mathbb{Z}_\infty^+}}]$ by $\mathcal{I}$}, denoted $\mathbb{Q}_\infty[X^{{\mathbb{Z}_\infty^+}}]\setminus\mathcal{I}$, by $$\mathbb{Q}_\infty[X^{{\mathbb{Z}_\infty^+}}]\setminus\mathcal{I}=\{[\mathcal{I}]_\mathfrak{p}:\mathfrak{p}\in\mathbb{Q}_\infty[X^{{\mathbb{Z}_\infty^+}}]\}.$$  Further, since $[\mathcal{I}]$ is a congruence relation, we may canonically define addition \\ $\hat+:(\mathbb{Q}_\infty[X^{{\mathbb{Z}_\infty^+}}]\setminus\mathcal{I})\times(\mathbb{Q}_\infty[X^{{\mathbb{Z}_\infty^+}}]\setminus\mathcal{I})\rightarrow\mathbb{Q}_\infty[X^{{\mathbb{Z}_\infty^+}}]\setminus\mathcal{I}$ by  $$[\mathcal{I}]_\mathfrak{p}\hat+[\mathcal{I}]_\mathfrak{q}=[\mathcal{I}]_{\mathfrak{p}\hat+\mathfrak{q}},$$ where we have abused the symbol $\hat+$ since it canonically induces the additive structure on $\mathbb{Q}_\infty[X^{{\mathbb{Z}_\infty^+}}]\setminus\mathcal{I}$. Similarly, we may define negation $\hat-:\mathbb{Q}_\infty[X^{{\mathbb{Z}_\infty^+}}]\setminus\mathcal{I}\rightarrow\mathbb{Q}_\infty[X^{{\mathbb{Z}_\infty^+}}]\setminus\mathcal{I}$ by $$\hat-[\mathcal{I}]_\mathfrak{p}=[\mathcal{I}]_{\hat-\mathfrak{p}},$$ and we define multiplication $\hat\times:(\mathbb{Q}_\infty[X^{{\mathbb{Z}_\infty^+}}]\setminus\mathcal{I})\times(\mathbb{Q}_\infty[X^{{\mathbb{Z}_\infty^+}}]\setminus\mathcal{I})\rightarrow\mathbb{Q}_\infty[X^{{\mathbb{Z}_\infty^+}}]\setminus\mathcal{I}$ by $$[\mathcal{I}]_\mathfrak{p}\hat\times[\mathcal{I}]_\mathfrak{q}=[\mathcal{I}]_{\mathfrak{p}\hat\times\mathfrak{q}}.$$ In light of these nice factorings for our algebraic operations, we may then define the ordering $\preceq$ on $\mathbb{Q}_\infty[X^{{\mathbb{Z}_\infty^+}}]\setminus\mathcal{I}$ as $$[\mathcal{I}]_\mathfrak{p}\prec[\mathcal{I}]_\mathfrak{q}\iff p\prec q\wedge[\mathcal{I}_\mathfrak{p}\neq\mathcal{I}_\mathfrak{q}].$$ The class $\mathbb{Q}_\infty[X^{{\mathbb{Z}_\infty^+}}]\setminus\mathcal{I}$ together with the above algebraic and ordering structure will be called the {\bf quotient ring of $\mathbb{Q}_\infty[X^{{\mathbb{Z}_\infty^+}}]$ generated by $\mathcal{I}$}. \\
\end{defn}

That the above construction actually gives rise to an ordered ring/field is a very standard proof in Galois theory once we have established that $\mathbb{Q}_\infty[X^{{\mathbb{Z}_\infty^+}}]$ is an ordered Ring, as we did in Theorem {\it 4.1}. The only nuanced pieces of the proof machinery that must be lifted to this setting are the requirements that the ideals generated by irreducible naked polynomials still be maximal, so that their quotients will be fields, and that $\mathbb{Q}_\infty[X^{\mathbb{Z}_\infty^+}]$ be an Euclidean domain. \\

\begin{thm}
Let $\mathfrak{p}$ be an irreducible naked polynomial; then $\mathcal{I}_\mathfrak{p}$ is a maximal ideal in $\mathbb{Q}_\infty[X^{{\mathbb{Z}_\infty^+}}]$.
\end{thm}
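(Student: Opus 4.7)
The plan is to mimic the classical argument that in a Euclidean domain (equivalently, a PID), ideals generated by irreducible elements are maximal. The degree-like map $_{\hat 0}\uparrow_{(\cdot)}$ introduced in the Note after the first Definition will serve as the Euclidean norm; its codomain $\mathbb{Z}_\infty^+$ is well-ordered, so the standard division-with-remainder and B\'ezout arguments adapt transfinitely. I will freely invoke the Euclidean-domain structure flagged in the paragraph preceding this theorem, treating it as an assumption to be established elsewhere in the paper.

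First I would unpack the definition of $\mathcal{I}_\mathfrak{p}$ to confirm it really is an ideal in the usual ring-theoretic sense. By inspection, $\mathfrak{q}\in\mathcal{I}_\mathfrak{p}$ iff there exists $\mathfrak{r}$ with $\mathfrak{p}\hat\times\mathfrak{r}=\mathfrak{q}$, so $\mathcal{I}_\mathfrak{p}$ is precisely the class of $\hat\times$-multiples of $\mathfrak{p}$; closure under $\hat+$, $\hat-$, and absorption of arbitrary $\hat\times$-multiplication then follow from the ring axioms established in Theorem {\it 4.1}.

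Next I would fix an arbitrary ideal $\mathcal{J}$ with $\mathcal{I}_\mathfrak{p}\subsetneq\mathcal{J}$ and pick some witness $\mathfrak{q}\in\mathcal{J}\setminus\mathcal{I}_\mathfrak{p}$, aiming to force $\hat 1\in\mathcal{J}$. Set $\mathfrak{d}=\gcd(\mathfrak{p},\mathfrak{q})$, produced by the transfinite Euclidean algorithm on degrees. Since $\mathfrak{d}$ splits $\mathfrak{p}$ and $\mathfrak{p}$ is irreducible, $\mathfrak{d}$ must be associate to either $\mathfrak{p}$ or $\hat 1$. The first option is impossible: it would say $\mathfrak{p}$ splits $\mathfrak{q}$, hence $\mathfrak{q}\in\mathcal{I}_\mathfrak{p}$, contradicting the choice of $\mathfrak{q}$. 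Hence $\mathfrak{d}$ is a unit, and B\'ezout yields $\mathfrak{a},\mathfrak{b}\in\mathbb{Q}_\infty[X^{\mathbb{Z}_\infty^+}]$ with $\mathfrak{a}\hat\times\mathfrak{p}\hat+\mathfrak{b}\hat\times\mathfrak{q}=\hat 1$; since $\mathfrak{a}\hat\times\mathfrak{p}\in\mathcal{I}_\mathfrak{p}\subseteq\mathcal{J}$ and $\mathfrak{b}\hat\times\mathfrak{q}\in\mathcal{J}$, this places $\hat 1$ in $\mathcal{J}$ and forces $\mathcal{J}=\mathbb{Q}_\infty[X^{\mathbb{Z}_\infty^+}]$.

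The main obstacle is justifying the Euclidean/B\'ezout step in the transfinite regime: ordinary induction on natural-number degree must be replaced by well-founded induction on surinteger-valued degrees, and one must check that the chain of remainders produced by the Euclidean algorithm is strictly decreasing in $\mathbb{Z}_\infty^+$ and hence terminates by well-ordering. A secondary subtlety is that the paper's strict definition of irreducibility via $\mathcal{F}_\mathfrak{p}=\{\{\mathfrak{p},\hat 1\}\}$ is too rigid when the base field has nontrivial units, as in $\mathbb{Q}_\infty$ (any nonzero constant $u$ yields a factorization $\{u,u^{-1}\hat\times\mathfrak{p}\}$ of $\mathfrak{p}$); before invoking the dichotomy on $\mathfrak{d}$ I would interpret irreducibility up to associates in the standard sense, which is what the argument above genuinely needs.
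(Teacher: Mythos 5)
Your overall route is genuinely different from the paper's: you run the classical Euclidean-domain argument (pick $\mathfrak{q}\in\mathcal{J}\setminus\mathcal{I}_\mathfrak{p}$, form $\gcd(\mathfrak{p},\mathfrak{q})$, invoke B\'ezout, conclude $\hat1\in\mathcal{J}$), whereas the paper argues directly: after checking closure under differences and absorption, it observes that $\mathcal{F}_\mathfrak{p}=\{\{\mathfrak{p},\hat1\}\}$ and $\mathcal{I}_{\hat1}$ is the whole ring, and concludes there is nothing strictly between $\mathcal{I}_\mathfrak{p}$ and $\mathbb{Q}_\infty[X^{\mathbb{Z}_\infty^+}]$ --- an argument that only inspects principal ideals generated by factors of $\mathfrak{p}$ and never confronts an arbitrary intermediate ideal $\mathcal{J}$. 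In that sense your strategy is the more complete one in principle. Your side remark about irreducibility needing to be read up to associates (nonzero constants are units in $\mathbb{Q}_\infty[X^{\mathbb{Z}_\infty^+}]$) is also a fair correction to the paper's literal definition.

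However, there is a genuine gap in how you justify the Euclidean/B\'ezout step, and it is not one you can fully delegate to the paper's later division theorem. You assert that the degree map lands in $\mathbb{Z}_\infty^+$, "which is well-ordered, so the chain of remainders is strictly decreasing and hence terminates." But $\mathbb{Z}_\infty^+$ (the nonnegative Surintegers) is emphatically not well-ordered: $\omega>\omega-1>\omega-2>\cdots$ is an infinite strictly decreasing sequence, and this is exactly why the paper's division procedure is a transfinite recursion with limit stages, whose termination rests on the support condition (every subset of a support has a maximal element), not on well-foundedness of the exponents. More seriously, even granting the paper's division-with-remainder theorem, the passage from "Euclidean-type division" to "gcds exist and B\'ezout holds" is the step that classically uses well-ordering of the norm's codomain (choose a nonzero element of least norm in the ideal $\{\mathfrak{a}\hat\times\mathfrak{p}\hat+\mathfrak{b}\hat\times\mathfrak{q}\}$); with norms valued in the non-well-ordered class $\mathbb{Z}_\infty^+$, a least-norm element need not exist, so the existence of $\mathfrak{d}=\gcd(\mathfrak{p},\mathfrak{q})$ and of a B\'ezout identity requires a new argument (transfinite iteration of the division step, with something said at limit stages, or some other device) that your proposal does not supply. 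Until that is filled in, the key maximality step --- producing $\hat1\in\mathcal{J}$ --- is unsupported.
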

\begin{proof}
First, we show that $\mathcal{I}_\mathfrak{p}$ is an ideal. Suppose that $\mathfrak{q},\mathfrak{r}\in\mathcal{I}_\mathfrak{p}$ with $\mathfrak{q}\neq\mathfrak{p}\neq\mathfrak{r}$, so there exist $\mathfrak{q}'\neq\hat1\neq\mathfrak{r}'$ in $\mathbb{Q}_\infty[X^{{\mathbb{Z}_\infty^+}}]$ such that $\mathfrak{p}\hat\times\mathfrak{q}'=\mathfrak{q}$ and $\mathfrak{p}\hat\times\mathfrak{r}'=\mathfrak{r}$. We wish to show that $\mathfrak{q}\hat-\mathfrak{r}\in\mathcal{I}_\mathfrak{p}$, and indeed $$\mathfrak{q}\hat-\mathfrak{r}=\mathfrak{p}\hat\times\mathfrak{q}'\hat-\mathfrak{p}\hat\times\mathfrak{r}'=\mathfrak{p}\hat\times(\mathfrak{q}'\hat-\mathfrak{r}')\implies\mathfrak{p}\in\{\mathfrak{p},\mathfrak{q}'\hat-\mathfrak{r}'\}\in\mathcal{F}_{\mathfrak{q}\hat-\mathfrak{r}}\implies\mathfrak{q}\hat-\mathfrak{r}\in\mathcal{I}_\mathfrak{p}.$$ Now, suppose that $\mathfrak{s}\in\mathbb{Q}_\infty[X^{{\mathbb{Z}_\infty^+}}]$. We then have that $$\mathfrak{s}\hat\times\mathfrak{q}=\mathfrak{s}\hat\times(\mathfrak{p}\hat\times\mathfrak{q}')=\mathfrak{p}\hat\times(\mathfrak{s}\hat\times\mathfrak{q}'),$$ thus $\mathfrak{s}\hat\times\mathfrak{q}\in\mathcal{I}_\mathfrak{p}$. This completes the proof that $\mathcal{I}_\mathfrak{p}$ is closed under differences and multiplication by elements of $\mathbb{Q}_\infty[X^{{\mathbb{Z}_\infty^+}}]$, thus $\mathcal{I}_\mathfrak{p}$ is an ideal in $\mathbb{Q}_\infty[X^{{\mathbb{Z}_\infty^+}}]$. To see that $\mathcal{I}_\mathfrak{p}$ is maximal, we simply observe that $\mathcal{F}_\mathfrak{p}=\{\{\mathfrak{p},\hat1\}\}$ since $\mathfrak{p}$ is irreducible, and $\mathcal{I}_{\hat1}=\mathbb{Q}_\infty[X^{{\mathbb{Z}_\infty^+}}]$ since $\hat1\in\{\hat1,\mathfrak{q}\}\in\mathcal{F}_\mathfrak{q}$ for all $\mathfrak{q}\in\mathbb{Q}_\infty[X^{{\mathbb{Z}_\infty^+}}]$, so there are no ideals strictly between $\mathcal{I}_\mathfrak{p}$ and $\mathbb{Q}_\infty[X^{{\mathbb{Z}_\infty^+}}]$. This completes the proof. \\
\end{proof}

As usual, $\big(x\big)=\big(({_\beta}[1]_1)_{\beta\in\mathbb{Z}_\infty^+}\big)$ is the largest ideal. In order to carry out the proper recursion for the next theorem showing that $\mathbb{Q}_\infty[X^{\mathbb{Z}_\infty^+}]$ is an Euclidean domain, we must first define a generalization of $_\mathfrak{p}\uparrow_\mathfrak{q}$ which is amenable to recursion.  \\

\begin{defn}
For all $\mathfrak{p}\in\mathbb{Q}_\infty[X^{\mathbb{Z}_\infty^+}]$ we define $\overline{|\mathfrak{p}|}\in O_n$ to be the unique ordinal which is in bijection with $supp(\mathfrak{p})$, guaranteed by the counting principle which is an equivalent of the standard axiom of choice ([$2$]). Note that $supp(\mathfrak{p})$ must be a set, since it has a maximal element $_{\hat0}\uparrow_\mathfrak{p}$ and there are only some set-sized ordinal number of positions between any fixed $n\in\mathbb{Z}_\infty^+$ and $0$, since it is discretely ordered -- consider the Surinteger normal form of $n$ to determine the number of positions. For all $\mathfrak{p},\mathfrak{q}\in\mathbb{Q}_\infty[X^{\mathbb{Z}_\infty^+}]$, we then define a sequence $\{{_\mathfrak{p}\uparrow^\alpha_\mathfrak{q}}\}_{\alpha<\zeta}$ by recursion on $\zeta=\max\{\overline{|\mathfrak{p}|},\overline{|\mathfrak{q}|}\}$ as follows: $$_\mathfrak{p}\uparrow^0_\mathfrak{q}={_\mathfrak{p}\uparrow_\mathfrak{q}},$$ $$_\mathfrak{q}\uparrow^{\alpha}_\mathfrak{p}=\max\Big(\{n\in\mathbb{Z}_\infty^+:\mathfrak{p}_n\neq\mathfrak{q}_n\}\setminus\{_\mathfrak{p}\uparrow^m_\mathfrak{q}:m<\alpha\}\Big).$$ Note that $\big(\{n\in\mathbb{Z}_\infty^+:\mathfrak{p}_n\neq\mathfrak{q}_n\}\setminus\{_\mathfrak{p}\uparrow^m_\mathfrak{q}:m<\alpha\}\big)\subseteq\big(supp(\mathfrak{p})\cup supp(\mathfrak{q})\big)$, so $_\mathfrak{p}\uparrow^\alpha_\mathfrak{q}$ is well defined for all $\alpha<\zeta$. This recursion lists the differing coordinates of $\mathfrak{p}$ and $\mathfrak{q}$ in decreasing order. Setting $\mathfrak{p}=\hat0$, we obtain a recursion that lists the non-zero coordinates of $\mathfrak{q}$ in decreasing order -- we will omit the $\hat0$ in this case and simply write $\{\uparrow^\alpha_\mathfrak{q}\}_{\alpha<\overline{|\mathfrak{q}|}}$, and we will call this sequence the {\bf coordinate sequence of $\mathfrak{q}$}. \\
\end{defn}

\begin{thm}
For all $\mathfrak{p}\in\mathbb{Q}_\infty[X^{\mathbb{Z}_\infty^+}]$, we now have a recursively defined representation for $\mathfrak{p}$: $$\mathfrak{p}=\Big(\sum_{\alpha<\overline{|\mathfrak{p}|}}{_\beta[\mathfrak{p}_{\uparrow^\alpha_\mathfrak{p}}]_{\uparrow^\alpha_\mathfrak{p}}}\Big)_{\beta\in\mathbb{Z}_\infty^+}.$$
\end{thm}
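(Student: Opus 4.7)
The plan is to check the identity one coordinate at a time. Fix an arbitrary $\beta \in \mathbb{Z}_\infty^+$ and consider the $\beta$-th coordinate of the right-hand side, namely $\sum_{\alpha < \overline{|\mathfrak{p}|}} {_\beta[\mathfrak{p}_{\uparrow^\alpha_\mathfrak{p}}]_{\uparrow^\alpha_\mathfrak{p}}}$. By the definition of the bracket symbol, each summand vanishes unless its lower index $\uparrow^\alpha_\mathfrak{p}$ equals $\beta$, in which case it contributes $\mathfrak{p}_{\uparrow^\alpha_\mathfrak{p}} = \mathfrak{p}_\beta$. Hence the identity reduces to the combinatorial claim that the coordinate sequence $\{\uparrow^\alpha_\mathfrak{p}\}_{\alpha < \overline{|\mathfrak{p}|}}$ is a bijection onto $supp(\mathfrak{p})$: surjectivity then guarantees that every $\beta \in supp(\mathfrak{p})$ appears as some $\uparrow^\alpha_\mathfrak{p}$ and contributes $\mathfrak{p}_\beta$, injectivity prevents double-counting, and for $\beta \notin supp(\mathfrak{p})$ no summand fires, producing the required $0 = \mathfrak{p}_\beta$.

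Next, I would establish the bijection directly from the recursion in Definition 4.4. Injectivity is built in: at stage $\alpha$ the value $\uparrow^\alpha_\mathfrak{p}$ is taken from $supp(\mathfrak{p}) \setminus \{\uparrow^m_\mathfrak{p} : m < \alpha\}$, so no earlier value can recur. The max operation at each stage is well-defined by the defining property of $\mathbb{F}[X^{\mathbb{G}^+}]$ from Definition 4.1, namely that every subset of $supp(\mathfrak{p})$ has a maximum. For surjectivity and termination, observe that if at some stage an $n \in supp(\mathfrak{p})$ has not yet been selected, then $\{n\}$ together with the as-yet-unselected portion of $supp(\mathfrak{p})$ is still a nonempty subset with a maximum, so the recursion continues; hence it halts only once the full support has been listed. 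Since $\overline{|\mathfrak{p}|}$ is by definition the unique ordinal in bijection with $supp(\mathfrak{p})$, the resulting enumeration has index length exactly $\overline{|\mathfrak{p}|}$.

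The main obstacle is the transfinite bookkeeping — specifically, verifying that the strictly decreasing enumeration produced by the recursion exhausts the support at precisely the ordinal $\overline{|\mathfrak{p}|}$, neither terminating early nor attempting to continue past it. This amounts to the standard fact that a reverse-well-ordered set is enumerated in decreasing order by a unique ordinal-indexed sequence whose length matches its order type, together with the identification of that length with $\overline{|\mathfrak{p}|}$ via the counting principle cited in Definition 4.4. Once this is in hand, the stated identity collapses to the coordinate-wise substitution already executed in the first paragraph.
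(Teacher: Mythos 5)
Your proposal is correct and follows essentially the same route as the paper: the paper's proof is a single sentence asserting that the recursive representation places $\mathfrak{p}_{\uparrow^\alpha_\mathfrak{p}}$ in position $\uparrow^\alpha_\mathfrak{p}$ and that these are exactly the nonzero coordinates, which is precisely the coordinate-wise check plus the bijectivity of the coordinate sequence onto $supp(\mathfrak{p})$ that you spell out. Your version simply makes explicit the injectivity, surjectivity, and length bookkeeping (via Definition 4.4 and the maximal-element property of supports) that the paper leaves implicit.
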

\begin{proof}
By the definitions involved, this places $\mathfrak{p}_{\uparrow^\alpha_\mathfrak{p}}$ in position $\uparrow^\alpha_\mathfrak{p}$ for all $\alpha<\overline{|\mathfrak{p}|}$, and these are precisely all of the non-zero coordinates of $\mathfrak{p}$. \\
\end{proof}

\begin{defn}
For all $\mathfrak{p}\in\mathbb{Q}_\infty[X^{\mathbb{Z}_\infty^+}]$, we will refer to the representation $$\mathfrak{p}=\Big(\sum_{\alpha<\overline{|\mathfrak{p}|}}{_\beta[\mathfrak{p}_{\uparrow^\alpha_\mathfrak{p}}]_{\uparrow^\alpha_\mathfrak{p}}}\Big)_{\beta\in\mathbb{Z}_\infty^+}$$ given in the previous theorem as the {\bf recursive representation} of $\mathfrak{p}$. Further, when expressing $\mathfrak{p}$ as its recursive representation we will omit the extra subscript on $\uparrow^\alpha_\mathfrak{p}$, yielding $$\mathfrak{p}=\Big(\sum_{\alpha<\overline{|\mathfrak{p}|}}{_\beta[\mathfrak{p}_{_{\uparrow^\alpha}}]_{\uparrow^\alpha}}\Big)_{\beta\in\mathbb{Z}_\infty^+}.$$\\
\end{defn}

The recursive representation of a naked polynomial is necessary for the computations in the next theorem. \\

\begin{thm}
For all $\mathfrak{p},\mathfrak{q}\in\mathbb{Q}_\infty[X^{\mathbb{Z}_\infty^+}]$ such that $\uparrow^0_\mathfrak{q}\leq\uparrow^0_\mathfrak{p}$, there exist unique $\mathfrak{r},\mathfrak{s}\in\mathbb{Q}_\infty[X^{\mathbb{Z}_\infty^+}]$ such that $$\mathfrak{p}=\mathfrak{q}\hat\times\mathfrak{r}\hat+\mathfrak{s},$$ and $\mathfrak{s}=\hat0$ or $\uparrow^0_\mathfrak{s}<\uparrow^0_\mathfrak{q}$. Accordingly, $\mathbb{Q}_\infty[X^{\mathbb{Z}_\infty^+}]$ is an Euclidean domain with $norm:\mathbb{Q}_\infty\rightarrow\mathbb{Z}_\infty^+$ given by $$norm(\mathfrak{p})=\uparrow^0_\mathfrak{p}$$ for all $\mathfrak{p}\in\mathbb{Q}_\infty[X^{\mathbb{Z}_\infty^+}]$.
\end{thm}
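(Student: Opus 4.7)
The plan is to lift the classical polynomial division algorithm to the naked setting by an ordinal-indexed recursion on ``partial quotients,'' using the recursive representation to strip off leading terms in strictly decreasing order; then $\mathfrak{r}$ emerges as the sum of the corrections and $\mathfrak{s}$ as the terminal remainder. Concretely, I would set $\mathfrak{p}^{(0)} = \mathfrak{p}$, $\mathfrak{r}^{(0)} = \hat{0}$, and at a successor stage $\alpha+1$ for which $\mathfrak{p}^{(\alpha)} \neq \hat{0}$ and $\uparrow^0_{\mathfrak{p}^{(\alpha)}} \geq \uparrow^0_\mathfrak{q}$ introduce the leading-term eliminator
\[\mathfrak{m}_\alpha = \Big({_\beta}\big[\mathfrak{p}^{(\alpha)}_{\uparrow^0_{\mathfrak{p}^{(\alpha)}}} \big/ \mathfrak{q}_{\uparrow^0_\mathfrak{q}}\big]_{\uparrow^0_{\mathfrak{p}^{(\alpha)}} - \uparrow^0_\mathfrak{q}}\Big)_{\beta \in \mathbb{Z}_\infty^+},\]
the unique monomial whose product with $\mathfrak{q}$ matches $\mathfrak{p}^{(\alpha)}$ at position $\uparrow^0_{\mathfrak{p}^{(\alpha)}}$, and define $\mathfrak{r}^{(\alpha+1)} = \mathfrak{r}^{(\alpha)} \hat+ \mathfrak{m}_\alpha$, $\mathfrak{p}^{(\alpha+1)} = \mathfrak{p} \hat- \mathfrak{q} \hat\times \mathfrak{r}^{(\alpha+1)}$; the leading coordinate cancels, giving $\uparrow^0_{\mathfrak{p}^{(\alpha+1)}} < \uparrow^0_{\mathfrak{p}^{(\alpha)}}$. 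At a limit stage $\lambda$, set $\mathfrak{r}^{(\lambda)}$ coordinate-wise to the eventually stable value of $\mathfrak{r}^{(\alpha)}$ --- each coordinate is modified at most once, since the correction positions $\gamma_\alpha = \uparrow^0_{\mathfrak{p}^{(\alpha)}} - \uparrow^0_\mathfrak{q}$ are strictly decreasing in $\alpha$ --- and put $\mathfrak{p}^{(\lambda)} = \mathfrak{p} \hat- \mathfrak{q} \hat\times \mathfrak{r}^{(\lambda)}$. Halt at the least ordinal $\alpha^\ast$ for which $\mathfrak{p}^{(\alpha^\ast)} = \hat{0}$ or $\uparrow^0_{\mathfrak{p}^{(\alpha^\ast)}} < \uparrow^0_\mathfrak{q}$ and output $\mathfrak{r} = \mathfrak{r}^{(\alpha^\ast)}$, $\mathfrak{s} = \mathfrak{p}^{(\alpha^\ast)}$.

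Two routine verifications accompany the construction. First, $supp(\mathfrak{r}^{(\alpha)}) = \{\gamma_\beta : \beta < \alpha\}$ is a strictly decreasing transfinite sequence, hence reverse-well-ordered, and the auxiliary lemma that sums and translates of reverse-well-ordered subsets of $\mathbb{Z}_\infty^+$ remain reverse-well-ordered (via the standard interleaving argument) shows that each $\mathfrak{r}^{(\alpha)}$ and $\mathfrak{p}^{(\alpha)}$ are bona fide naked polynomials. Second, uniqueness follows from the standard leading-coefficient argument: from $\mathfrak{q} \hat\times \mathfrak{r}_1 \hat+ \mathfrak{s}_1 = \mathfrak{q} \hat\times \mathfrak{r}_2 \hat+ \mathfrak{s}_2$ we obtain $\mathfrak{q} \hat\times (\mathfrak{r}_1 \hat- \mathfrak{r}_2) = \mathfrak{s}_2 \hat- \mathfrak{s}_1$, whose left side has $\uparrow^0 \geq \uparrow^0_\mathfrak{q}$ when nonzero --- by the additivity of $\uparrow^0$ on products of nonzero naked polynomials, which rests on $\mathbb{Q}_\infty$ being a domain --- contradicting the right's bound of $\uparrow^0 < \uparrow^0_\mathfrak{q}$, so both sides must vanish.

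The hard part is proving the recursion actually terminates at some $\alpha^\ast$. Because $\mathbb{Z}_\infty^+$ is only discretely --- not well --- ordered, a strictly decreasing transfinite sequence of leading positions could a priori persist through every ordinal, so the naive Noetherian-descent argument is unavailable. The rescue is that every $\uparrow^0_{\mathfrak{p}^{(\alpha)}}$ lies in the reverse-well-ordered set $supp(\mathfrak{p}) \cup \big(supp(\mathfrak{q}) + supp(\mathfrak{r}^{(\alpha)})\big)$, and strictly decreasing transfinite chains in a reverse-well-ordered set have ordinal length bounded by its reverse order type; the remaining subtlety --- that this ambient set grows with $\alpha$ --- I would close by a bootstrap controlling the reverse order type in terms of a fixed ordinal expressible in $\overline{|\mathfrak{p}|}$ and $\overline{|\mathfrak{q}|}$, forcing an absolute bound on $\alpha^\ast$. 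Once termination is secured, the equality $\mathfrak{p} = \mathfrak{q} \hat\times \mathfrak{r} \hat+ \mathfrak{s}$ with the required bound on $\mathfrak{s}$ holds by construction, and declaring $norm(\mathfrak{p}) = \uparrow^0_\mathfrak{p}$ realizes $\mathbb{Q}_\infty[X^{\mathbb{Z}_\infty^+}]$ as the claimed Euclidean domain.
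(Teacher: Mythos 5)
Your proposal is essentially the paper's own proof: the same transfinite leading-term-elimination recursion, with a monomial correction $\mathfrak{m}_\alpha$ at successor stages, coordinatewise stabilization at limit stages, halting once the remainder vanishes or its degree drops below $\uparrow^0_\mathfrak{q}$, and the identity $\mathfrak{p}=\mathfrak{q}\hat\times\mathfrak{r}\hat+\mathfrak{s}$ holding by construction. Where you go beyond the paper --- the leading-coefficient uniqueness argument (the paper only asserts uniqueness ``by recursive parametrization'') and the explicit closure and termination concerns (the paper dismisses termination with ``all supports are sets'') --- your additions are sound, and the bootstrap you leave open closes cleanly by observing that every $\uparrow^0_{\mathfrak{p}^{(\alpha)}}$ lies in the \emph{fixed} set $supp(\mathfrak{p})+\langle supp(\mathfrak{q})-\uparrow^0_\mathfrak{q}\rangle$ (the monoid of finite sums of the nonpositive shifts), which a Neumann-type lemma shows is reverse-well-ordered, so the strictly decreasing sequence of leading positions has set-sized ordinal length.
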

\begin{proof}
We will show that $\mathbb{Q}_\infty[X^{\mathbb{Z}_\infty^+}]$ is closed under the division recursion defined below (this recusrion is called an {\it algorithm} when it is finite). For all $\mathfrak{p},\mathfrak{q}\in\mathbb{Q}_\infty[X^{\mathbb{Z}_\infty^+}]$ such that $\uparrow_\mathfrak{q}\leq{\uparrow_\mathfrak{p}}$, we define $\mathfrak{r}=\mathfrak{q}\lceil\mathfrak{p}$ and $\mathfrak{s}$ by recursion as follows: $$\mathfrak{q}\lceil\mathfrak{p}^0=\big({_\beta[\frac{\mathfrak{p}_{_{\uparrow^0}}}{\mathfrak{q}_{_{\uparrow^0}}}]_{\uparrow^0_\mathfrak{p}-\uparrow^0_\mathfrak{q}}}\big)_{\beta\in\mathbb{Z}_\infty^+},$$ $$\mathfrak{s}^0=\mathfrak{p}\hat-\mathfrak{q}\hat\times\big({_\beta[\frac{\mathfrak{p}_{_{\uparrow^0}}}{\mathfrak{q}_{_{\uparrow^0}}}]_{\uparrow^0_\mathfrak{p}-\uparrow^0_\mathfrak{q}}}\big)_{\beta\in\mathbb{Z}_\infty^+},$$ $$\mathfrak{q}\lceil\mathfrak{p}^\alpha=\Big(\sum_{i<\alpha}{_\beta}[\mathfrak{q}\lceil\mathfrak{p}^i]_{\uparrow^i}\Big)_{\beta\in\mathbb{Z}_\infty^+}+\big({_\beta}[\frac{\mathfrak{s}^{\alpha-1}_{_{\uparrow^0}}}{\mathfrak{q}_{_{\uparrow^0}}}]_{\uparrow^0_{\mathfrak{s}^{\alpha-1}}-\uparrow^0_\mathfrak{q}}\big)_{\beta\in\mathbb{Z}_\infty^+},\ \text{if}\ \alpha\ \text{is a successor ordinal},$$ $$\mathfrak{s}^\alpha=\mathfrak{s}^{\alpha-1}\hat-\mathfrak{q}\hat\times\big({_\beta[\frac{\mathfrak{s}^{\alpha-1}_{_{\uparrow^0}}}{\mathfrak{q}_{_{\uparrow^0}}}]_{\uparrow^0_{\mathfrak{s}^{\alpha-1}}-\uparrow^0_\mathfrak{q}}}\big)_{\beta\in\mathbb{Z}_\infty^+},\ \text{if}\ \alpha\ \text{is a successor ordinal},$$ $$\mathfrak{q}\lceil\mathfrak{p}^{\gamma}=\Big(\sum_{\alpha<\gamma}{_\beta}[\mathfrak{q}\lceil\mathfrak{p}^\alpha]_{\uparrow^\alpha}\Big)_{\beta\in\mathbb{Z}_\infty^+},\ 0<\gamma\ \text{is a limit ordinal},$$ $$\mathfrak{s}^{\gamma}=\mathfrak{p}\hat-\mathfrak{q}\hat\times\mathfrak{q}\lceil\mathfrak{p}^{\gamma},\ 0<\gamma\ \text{is a limit ordinal}.$$ Note that the recursion taking place here only terminates at a successor step if $\uparrow^0_{\mathfrak{s}^\alpha}<\uparrow^0_\mathfrak{q}$ for some successor ordinal $\alpha$ -- in other words it only terminates if the degree of $\mathfrak{s}^\alpha$ is less than the degree of $\mathfrak{q}$ for some successor $\alpha$, which only happens if $\uparrow^\beta_\mathfrak{p}$ and $\uparrow^0_\mathfrak{q}$ are comparable for some $\beta$ such that $\uparrow^0_\mathfrak{q}\leq\uparrow^\beta_\mathfrak{p}$. If $\uparrow^0_\mathfrak{q}<<\uparrow^\beta_\mathfrak{p}$ for all $\beta$, or if $\uparrow^0_\mathfrak{q}<<\uparrow^\alpha_\mathfrak{p}$ for all $\alpha<\beta$ and $\uparrow^0_\mathfrak{q}>\uparrow^\beta_\mathfrak{p}$ for some $0<\beta\in O_n$, the recursion terminates at the limit ordinal step $\omega\cdot\beta$. Further, since all supports are sets by the observations in Definition {\it 4.5}, the recursion must terminate at some ordinal step. Denote by $\overline{|\mathfrak{q}\lceil\mathfrak{p}|}$ the unique ordinal step at which this recursion terminates, noting that $0<\overline{|\mathfrak{q}\lceil\mathfrak{p}|}$ since $\uparrow^0_\mathfrak{q}\leq\uparrow^0_\mathfrak{p}$; we then define $\mathfrak{q}\lceil\mathfrak{p}^{\overline{|\mathfrak{q}\lceil\mathfrak{p}|}}=\mathfrak{q}\lceil\mathfrak{p}=\mathfrak{r}$, and $\mathfrak{s}^{\overline{|\mathfrak{q}\lceil\mathfrak{p}|}}=\mathfrak{s}$. By the above observations on when this recursion terminates, we have that $\uparrow_\mathfrak{s}<{\uparrow}_\mathfrak{q}$ if $\overline{|\mathfrak{q}\lceil\mathfrak{p}|}$ is a successor ordinal. Further we note that $\mathfrak{s}=\hat0\iff\mathfrak{q}\hat\times(\mathfrak{q}\lceil\mathfrak{p})=\mathfrak{p}$, which happens precisely iff $\{\mathfrak{q},\mathfrak{q}\lceil\mathfrak{p}\}\in\mathcal{F}_\mathfrak{p}$, which implies that $$\mathfrak{q}\hat\times\mathfrak{r}\hat+\mathfrak{s}=\Big(\mathfrak{q}\hat\times(\mathfrak{q}\lceil\mathfrak{p})\Big)\hat+\Big(\mathfrak{p}\hat-\mathfrak{q}\hat\times(\mathfrak{q}\lceil\mathfrak{p})\Big)=\mathfrak{p}\hat+0=\mathfrak{p}.$$ Suppose that $\{\mathfrak{q},\mathfrak{q}\lceil\mathfrak{p}\}\notin\mathcal{F}_\mathfrak{p}$, so $\mathfrak{s}\neq\hat0$. Recall now that $\zeta\in O_n$ is a limit ordinal iff $\zeta=\omega\cdot\gamma$ for some $\gamma\in O_n$, where $\cdot$ denotes recursive ordinal multiplication ([$2$]). Suppose the recursion terminates at a limit ordinal, so $\overline{|\mathfrak{q}\lceil\mathfrak{p}|}=\omega\cdot\gamma$ for some $0<\gamma\in O_n$. We then have that $\mathfrak{q}\hat\times\mathfrak{q}\lceil\mathfrak{p}$ is exactly the highest $\gamma$ nonzero support terms of $\mathfrak{p}$, which were incomparable $\uparrow^0_\mathfrak{q}$. More precisely, $\mathfrak{q}\hat\times\mathfrak{q}\lceil\mathfrak{p}=(\sum_{\alpha<\gamma}{_\beta}[\mathfrak{p}_{\uparrow^\alpha}]_{\uparrow^\alpha_\mathfrak{p}})_{\beta\in\mathbb{Z}_\infty^+}$, thus we have that $$\mathfrak{s}=\mathfrak{p}\hat-\mathfrak{q}\hat\times\mathfrak{q}\lceil\mathfrak{p}=(\sum_{\alpha<\overline{|\mathfrak{p}|}}{_\beta}[\mathfrak{p}_{\uparrow^\alpha}]_{\uparrow^\alpha_\mathfrak{p}})_{\beta\in\mathbb{Z}_\infty^+}\hat-(\sum_{\alpha<\gamma}{_\beta}[\mathfrak{p}_{\uparrow^\alpha}]_{\uparrow^\alpha_\mathfrak{p}})_{\beta\in\mathbb{Z}_\infty^+}=(\sum_{\gamma\leq\alpha<\overline{|\mathfrak{p}|}}{_\beta}[\mathfrak{p}_{\uparrow^\alpha}]_{\uparrow^\alpha_\mathfrak{p}})_{\beta\in\mathbb{Z}_\infty^+}.$$ Since $\mathfrak{s}\neq\hat0$ we have that $\gamma<\overline{|\mathfrak{p}|}$, and further we then have that $\uparrow^0_\mathfrak{s}=\uparrow^\gamma_\mathfrak{p}<\uparrow^0_\mathfrak{q}$ since the recursion terminated. We also have that $$\mathfrak{q}\hat\times\mathfrak{q}\lceil\mathfrak{p}\hat+\mathfrak{s}=(\sum_{\alpha<\gamma}{_\beta}[\mathfrak{p}_{\uparrow^\alpha}]_{\uparrow^\alpha_\mathfrak{p}})_{\beta\in\mathbb{Z}_\infty^+}\hat+(\sum_{\gamma\leq\alpha<\overline{|\mathfrak{p}|}}{_\beta}[\mathfrak{p}_{\uparrow^\alpha}]_{\uparrow^\alpha_\mathfrak{p}})_{\beta\in\mathbb{Z}_\infty^+}=(\sum_{\alpha<\overline{|\mathfrak{p}|}}{_\beta}[\mathfrak{p}_{\uparrow^\alpha}]_{\uparrow^\alpha_\mathfrak{p}})_{\beta\in\mathbb{Z}_\infty^+}=\mathfrak{p}.$$ Now suppose the recursion terminates at a successor ordinal step, so $\uparrow^0_\mathfrak{s}<\uparrow^0_\mathfrak{q}$ by our previous observation; we now wish to show that the desired equality still holds. But we may now observe that, since the last limit ordinal step where we already know it was well behaved, this has simply been the standard division algorithm of a polynomial ring. Conesquently $\mathfrak{p}=\mathfrak{q}\hat\times\mathfrak{q}\lceil\mathfrak{p}\hat+\mathfrak{s}=\mathfrak{q}\hat\times\mathfrak{r}\hat+\mathfrak{s}$ once again, and since $\mathfrak{p}$ and $\mathfrak{q}$ were arbitrary and $\mathfrak{r}$ and $\mathfrak{s}$ are clearly unique by their recursive parametrization from $\mathfrak{p}$ and $\mathfrak{q}$, this completes the proof. \\
\end{proof}

\begin{note}
We now give a few examples which the reader can check for themselves to hopefully gain some intuition. First, note that the above recursion is simply a transfinite version of the standard 'division algorithm' endowed on a polynomial ring. All of the following recursions can be intuitively carried out by writing $\mathfrak{p}$ inside the standard polynomial long-division symbol and $\mathfrak{q}$ outside the symbol, where $\mathfrak{q}\lceil\mathfrak{p}$ will then appear by recursion on top of the symbol using the standard long-division algorithm extended to transfinitely many steps -- $\mathfrak{s}$ is the remainder left over.  For example,
\begin{itemize}
\item Suppose $\mathfrak{p}=({_\beta}[1]_\omega)_{\beta\in\mathbb{Z}_\infty^+}=X^\omega$, and $\mathfrak{q}=({_\beta}[1]_2+{_\beta}[1]_0)_{\beta\in\mathbb{Z}_\infty^+}=X^2+1$. We then have that $$\mathfrak{q}\lceil\mathfrak{p}^0=({_\beta}[1]_{\omega-2})_{\beta\in\mathbb{Z}_\infty^+}=X^{\omega-2},$$ $$\mathfrak{s}^0=\mathfrak{p}\hat-\mathfrak{q}\hat\times\mathfrak{q}\lceil\mathfrak{p}^0=X^\omega\hat-\Big((X^2+1)\hat\times X^{\omega-2}\Big)=X^\omega\hat-(X^\omega+X^{\omega-2})=-X^{\omega-2},$$ $$\mathfrak{q}\lceil\mathfrak{p}^1=({_\beta}[1]_{\omega-2}+{_\beta}[-1]_{\omega-4})_{\beta\in\mathbb{Z}_\infty^+}=X^{\omega-2}-X^{\omega-4},$$ $$\mathfrak{s}^1=\mathfrak{s}^0\hat-(\mathfrak{q}\hat\times X^{\omega-4})=-X^{\omega-2}\hat-\Big((X^2+1)\hat\times- X^{\omega-4}\big)=-X^{\omega-2}\hat-(-X^{\omega-2}-X^{\omega-4})=X^{\omega-4},$$ $$\dots$$ $$\mathfrak{q}\lceil\mathfrak{p}^n=\Big(\sum_{i\leq n}{_\beta}[(-1)^i]_{\omega-2(i+1)}\Big)_{\beta\in\mathbb{Z}_\infty^+}=\sum_{i\leq n}(-1)^iX^{\omega-2(i+1)},$$ $$\mathfrak{s}^n=(-1)^nX^{\omega-2(n+1)},$$ $$\dots$$ $$\mathfrak{q}\lceil\mathfrak{p}^\omega=\mathfrak{q}\lceil\mathfrak{p}=\Big(\sum_{n<\omega}{_\beta}[(-1)^n]_{\omega-2(n+1)}\Big)_{\beta\in\mathbb{Z}_\infty^+}=\sum_{n<\omega}(-1)^nX^{\omega-2(n+1)},$$ $$\mathfrak{s}^\omega=\mathfrak{s}=\mathfrak{p}-\mathfrak{q}\hat\times\mathfrak{q}\lceil\mathfrak{p}.$$ $\mathfrak{q}\hat\times\mathfrak{q}\lceil\mathfrak{p}$ is now a telescoping sum in which all terms but the leading term, $X^\omega$, cancel out: $$\mathfrak{q}\hat\times\mathfrak{q}\lceil\mathfrak{p}=(X^2+1)\hat\times\Big(\sum_{n<\omega}(-1)^nX^{\omega-2(n+1)}\Big)=\sum_{n<\omega}(-1)^n(X^{\omega-2n}+X^{\omega-2(n+1)})$$ $$=(X^\omega+X^{\omega-2})-(X^{\omega-2}+X^{\omega-4})+(X^{\omega-4}+X^{\omega-6})-\dots=X^\omega=\mathfrak{p}.$$ We also have that $\mathfrak{s}=\hat0$, since $X^2+1$ divides $X^\omega$ exactly, and in general if $\{\uparrow^0_\mathfrak{q}\}<<supp(\mathfrak{p})$  then $\mathfrak{q}$ will divide $\mathfrak{p}$ exactly using these telescoping sums. We carry out the calculations dressed up because the intuition is easier, but keep in mind that these are really functions from $\mathbb{Q}_\infty$ into $\mathbb{Z}_\infty$ such that all subsets of the support have a maximal element. \\
\item Now, let $\mathfrak{p}=X^{\omega^\omega}$ and $\mathfrak{q}=\sum_{n<\omega}X^{\omega-3(n+1)}$. The reader can easily check that we obtain \\ $\mathfrak{q}\lceil\mathfrak{p}=\sum_{n<\omega}X^{\omega^\omega-\omega+3(1-n)}$, and further that $$\Big(\sum_{n<\omega}X^{\omega-3(n+1)}\Big)\hat\times\Big(\sum_{n<\omega}X^{\omega^\omega-\omega+3(1-n)}\Big)=X^{\omega^\omega},$$ so we see that the infinitely decreasing sum $\sum_{n<\omega}X^{\omega-3(n+1)}$ divides $X^{\omega^\omega}$ exactly, as was asserted above since $\{\omega-3(n+1)\}<<\{\omega^\omega\}$ for all $n<\omega$. For a nonzero remainder, we could simply make $\mathfrak{p}=X^{\omega^\omega}+X^n$ for any finite $n$, whereupon we will obtain $\mathfrak{q}\lceil\mathfrak{p}$ the same as before with $\mathfrak{s}=X^n$.\\
\end{itemize}
\end{note}

Many more rich and instructive examples can be found by making more interesting/complicated selections for $\mathfrak{p}$ and $\mathfrak{q}$, and exploration of these examples is encouraged -- this is initially how I came across the recursive structure defined in this paper. All that remains from a theoretical existence standpoint is to show that the non-constructive process of algebraic closure can be extended to the Surrational numbers in a canonical fashion -- this requires a version of Zorn's lemma which may be safely applied to proper classes. This is equivalent to the axiom of Global choice (GC). \\

\begin{note}
We now add the axiom of Global choice to whatever ambient set theory we are working in. \\
\end{note}

\begin{thm}
All fields admit algebraic closures. In particlar, $\mathbb{Q}_\infty$ has an algebraic closure, denoted $\overline{\mathbb{Q}}_\infty$.
\end{thm}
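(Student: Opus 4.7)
The plan is to adapt Artin's classical construction of algebraic closures to the naked-polynomial setting, with two substitutions: replace the ordinary polynomial ring by the naked polynomial ring $\mathbb{K}[X^{\mathbb{Z}_\infty^+}]$ of Theorem \textit{4.1}, and replace set-sized Zorn's Lemma by its proper-class analogue supplied by the just-adopted axiom of Global Choice (GC). It is cleanest to break the construction into two nested recursions: an inner one that, given an ordered field $\mathbb{K}\supseteq\mathbb{Q}_\infty$, produces a one-step extension $\mathbb{K}'$ in which every irreducible naked polynomial over $\mathbb{K}$ has a root, and an outer $\omega$-length recursion that iterates this one-step extension until the result is closed under root-taking.

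For the inner recursion, apply GC to well-order the proper class of irreducibles in $\mathbb{K}[X^{\mathbb{Z}_\infty^+}]$ as $\{\mathfrak{p}_\alpha\}$. Build an ascending chain $\mathbb{K}=\mathbb{K}_0\subseteq\mathbb{K}_1\subseteq\cdots$ by transfinite recursion: at a successor $\alpha+1$ set $\mathbb{K}_{\alpha+1}=\mathbb{K}_\alpha[X^{\mathbb{Z}_\infty^+}]\setminus\mathcal{I}_{\mathfrak{p}_\alpha}$, using Theorem \textit{4.3} to see the quotient is a field and Definition \textit{4.7} to transport the ordered-ring structure; at limit stages take unions, well-posed by the note following Theorem \textit{4.2} identifying $\mathbb{K}[X^{\mathbb{Z}_\infty^+}]$ as a union of its $\gamma$-number-indexed subrings. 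Let $\mathbb{K}'$ be the union of the entire chain; by construction every irreducible in $\mathbb{K}[X^{\mathbb{Z}_\infty^+}]$ acquires a root in $\mathbb{K}'$.

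For the outer recursion, put $\mathbb{K}^{(0)}=\mathbb{Q}_\infty$, $\mathbb{K}^{(n+1)}=(\mathbb{K}^{(n)})'$, and $\overline{\mathbb{Q}}_\infty=\bigcup_{n<\omega}\mathbb{K}^{(n)}$. For any irreducible $\mathfrak{p}\in\overline{\mathbb{Q}}_\infty[X^{\mathbb{Z}_\infty^+}]$, Definition \textit{4.5} shows $supp(\mathfrak{p})$ is a set, so a GC-choice of stages yields some $n$ with $\mathfrak{p}\in\mathbb{K}^{(n)}[X^{\mathbb{Z}_\infty^+}]$; hence $\mathfrak{p}$ gains a root in $\mathbb{K}^{(n+1)}\subseteq\overline{\mathbb{Q}}_\infty$, and a transfinite induction running the division procedure of Theorem \textit{4.5} peels off roots one at a time to split $\mathfrak{p}$ completely. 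Thus $\overline{\mathbb{Q}}_\infty$ is an algebraic closure of $\mathbb{Q}_\infty$, and the statement for a general field is obtained by running the same argument with $\mathbb{Q}_\infty$ replaced by an arbitrary base field, dropping the ordering clauses when no order is present.

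The main obstacle will be the coherent propagation of the order through the tower. In classical Galois theory this step is invisible because an Artin-style quotient produces only a ring-theoretic extension; here one also demands an extension of the ordered structure, and since a generic irreducible has multiple roots of opposite signs a further appeal to GC is required at each successor to fix a compatible sign, together with a verification that these choices cohere at limits. A subordinate subtlety, inherited from the classical Artin argument, is checking that $\mathcal{I}_{\mathfrak{p}_\alpha}$ remains proper after earlier adjunctions; this is handled by the Euclidean-domain property of Theorem \textit{4.5}, which rules out the earlier splittings producing a unit out of $\mathfrak{p}_\alpha$ through a remainder-driven contradiction. Once these bookkeeping concerns are settled, the remaining verifications are purely formal reflections of the classical argument.
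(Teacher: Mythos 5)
Your route is genuinely different from the paper's: the paper disposes of the theorem in one stroke by applying the proper-class version of Zorn's lemma to the class of all field extensions of $\mathbb{Q}_\infty$ ordered by inclusion embeddings, whereas you rebuild Artin's tower construction. But as written your tower has a genuine gap at the outer recursion. You set $\overline{\mathbb{Q}}_\infty=\bigcup_{n<\omega}\mathbb{K}^{(n)}$ and claim that, because $supp(\mathfrak{p})$ is a set, every irreducible $\mathfrak{p}$ over the union already lies in some $\mathbb{K}^{(n)}[X^{\mathbb{Z}_\infty^+}]$. That inference is exactly where the classical argument uses \emph{finiteness} of the support, and it fails here: a naked polynomial may have infinite (set-sized) support, say $supp(\mathfrak{p})=\{g_n:n<\omega\}$ with the coefficient at $g_n$ first appearing in $\mathbb{K}^{(n)}$, and then $\mathfrak{p}$ lies in $\overline{\mathbb{Q}}_\infty[X^{\mathbb{Z}_\infty^+}]$ but in no stage's polynomial ring. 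Global Choice does not repair this; the obstruction is boundedness, not choice. Moreover no set-length outer tower can close off, since supports can have arbitrary set cardinality; the natural fix is to run the outer recursion through all of $O_n$, so the tower is class-length, and then use the fact that any set of ordinal stages is bounded to conclude that each naked polynomial over the union appears at some stage --- at the cost of the set-theoretic bookkeeping of a class-length tower of proper-class-sized fields, which is precisely the kind of difficulty the paper's single application of class Zorn is meant to sidestep.

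A smaller but real problem is your treatment of the properness of $\mathcal{I}_{\mathfrak{p}_\alpha}$ at successor stages of the inner recursion. After earlier adjunctions $\mathfrak{p}_\alpha$ can genuinely become reducible over $\mathbb{K}_\alpha$ (indeed that is the desired outcome once one of its roots has been adjoined), and then $\mathcal{I}_{\mathfrak{p}_\alpha}$ is no longer maximal and $\mathbb{K}_\alpha[X^{\mathbb{Z}_\infty^+}]\setminus\mathcal{I}_{\mathfrak{p}_\alpha}$ is not a field; no ``remainder-driven contradiction'' from Theorem 4.5 rules this out. The standard repair --- quotient by the ideal of an irreducible factor of $\mathfrak{p}_\alpha$ over $\mathbb{K}_\alpha$, or skip $\mathfrak{p}_\alpha$ when it already has a root --- needs to be stated explicitly. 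Your remark that the ordering must be propagated coherently through the tower is a fair observation, but note that the theorem as stated (and the paper's proof) asks only for a field-theoretic algebraic closure, so that extra sign-choosing machinery is not required for the statement at hand.
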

\begin{proof}
By the proper class version of Zorn's lemma we have that the class of all field extensions of $\mathbb{Q}_\infty$, denoted $\mathbb{Q}_\infty[\infty]$ and ordered by inclusion embeddings, has at least one maximal element since each totally ordered subset of $\mathbb{Q}_\infty[\infty]$ has a maximal element in $\mathbb{Q}_\infty[\infty]$ generated by an irreducible polynomial, since $\mathbb{Q}_\infty[X^{\mathbb{Z}_\infty^+}]$ is an Euclidean domain and consequently a principal ideal domain. This comlpetes the proof.  \\
\end{proof}

\begin{defn}
We will refer to $\overline{\mathbb{Q}}_\infty$ as the {\bf algebraic Surrational numbers}. \\
\end{defn}

Here we conclude my exposition on this piece of the theory.  I intend to publish subsequent papers on the structure of various Galois groups over $\mathbb{Q}_\infty$ up to and including its absolute Galois group at a later date, but for now I intend to move on and construct the Surreals out of the algebraic Surrational numbers. I would like to profusely thank professors Andrew Conner and Charles Hamaker at St. Mary's College of California -- without their steadfast support and advice, this project likely would not have come to fruition so quickly. \\

\section{References}
\begin{enumerate}
\item {\it The Ordinals as a Consummate Abstraction of Number Systems, Alec Rhea}, 2017,  \\ https://arxiv.org/abs/1706.08908.
\item {\it Introduction to Set Theory, J. Donald Monk}, 1969.
\item {\it Fields and Galois Theory, J.S. Milne}, 2017, \\ http://www.jmilne.org/math/CourseNotes/FT.pdf.
\end{enumerate}

\end{document}